\documentclass{article}
\pdfoutput=1
\usepackage{amsmath}
\usepackage{amsthm}
\usepackage{amssymb}
\usepackage{mathrsfs}
\usepackage{graphicx}
\usepackage{verbatim}
\usepackage{tikz}
\usepackage[all]{xy}
\usepackage{graphics}
\usepackage[colorlinks=true]{hyperref}
\hypersetup{urlcolor=blue, citecolor=red}

\newtheorem{theorem}{Theorem}[section]

\newtheorem{proposition}[theorem]{Proposition}
\theoremstyle{definition}

\newcommand{\R}{\mathbb{R}}

\def\beq{\begin{equation}}
\def\eeq{\end{equation}}

\title{\sc A sharp Bernstein-type theorem for entire minimal graphs}
\date{}
\author{Alberto Farina }
\begin{document}
\numberwithin{equation}{section}
\maketitle
{\footnotesize
\centerline{LAMFA, CNRS UMR 7352} 
\centerline {Universit\'e de Picardie Jules Verne}
\centerline{33 rue Saint-Leu, 80039 Amiens, France}}

\centerline{}

\begin{abstract} We consider entire solutions $u$ to the minimal surface equation in $\R^N$, with $ N\ge8,$ and we prove  the following sharp result : {\it if $N-7$ partial derivatives $ \frac{\partial u }{\partial {x_j}}$ are bounded on one side (not necessarily the same), then $u$ is necessarily an affine function. }

\end{abstract}

\smallskip


\quad \textbf {MSC:} {\footnotesize 53A10, 58JO5, 35J15}

\section{Introduction and main results}

The graph of a smooth function $ u : \R^N \to \R$ is a minimal surface in $\R^{N+1}$ if and only if $u$ is a solution  to the minimal surface equation 

\beq \label{EqSupMin}
- {\rm div} \left( \frac{\nabla u}{\sqrt{1 + \vert \nabla u \vert^2 }} \right) = 0 \qquad {\rm in} \qquad \R^N, \qquad N\ge2.
\eeq
In his work \cite{Ber} (see also \cite{H}) S.N. Bernstein proved that any smooth solution $u$ to the minimal surface equation in $\R^2$ must be an affine function. 
This result has been extended to $\R^3$ by E. De Giorgi \cite{DG}, to $\R^4$ by  J.F. Almgren \cite{Alm} and, up to dimension $N=7$, by J. Simons \cite{Sim}. On the other hand, in the celebrated paper \cite{BDeGG}, E. Bombieri, E. De Giorgi and E. Giusti proved the existence of a non-affine entire solution of the minimal surface equation \eqref{EqSupMin} for any $ N \ge 8$. Nevertheless, J. Moser \cite{Mos} was able to prove that, if $ \nabla u $ is bounded on $\R^N$, then $u$ must be again an affine function, and this result holds true for every dimension $ N\ge2$. Later, E. Bombieri and E. Giusti \cite{BG} generalized Moser's result by assuming that \emph{only} $N-1$ partial derivatives of $u$ are bounded on $\R^N$, $N\ge2$.
In the recent work \cite{F} the author has further improved this result. More precisely, in \cite{F} it is demonstrated that any smooth solution $u$ to the minimal surface equation \eqref{EqSupMin} with $N-1$ partial derivatives bounded on one side (not necessarily the same) is necessarily an affine function.  


The main result of the present work is the following

\bigskip

\begin{theorem} \label{Th.N-7} Assume $ N \ge 8$ and let $u$ be a solution of the minimal surface equation 
\beq \label{EqSupMin2}
- {\rm div} \left( \frac{\nabla u}{\sqrt{1 + \vert \nabla u \vert^2 }} \right) = 0 \qquad {\rm in} \qquad \R^N.
\eeq

\noindent If $N-7$ partial derivatives of $u$ are bounded on one side (not necessarily the same) then $u$ is an affine function.

\end{theorem}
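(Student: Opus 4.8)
The plan is to pass to the tangent cone at infinity of the graph and run a dimension–reduction argument, paying for each unit of dimension with one of the $N-7$ one–sided bounds; the number $7$ then enters through the nonexistence of singular area–minimizing hypercones below the dimension of the Simons cone in $\R^8$.

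First I would record the geometric setup. The graph $M=\mathrm{graph}\,u\subset\R^{N+1}$ is an entire minimal graph, hence area–minimizing, with upward unit normal $\nu=(-\nabla u,1)/W$ where $W=\sqrt{1+|\nabla u|^2}$. For every constant vector $V\in\R^{N+1}$ the function $\langle\nu,V\rangle$ is a Jacobi field, i.e. $\Delta_M\langle\nu,V\rangle+|A|^2\langle\nu,V\rangle=0$, where $A$ is the second fundamental form of $M$; in particular $\langle\nu,e_{N+1}\rangle=1/W>0$. After relabelling, assume the controlled directions are $e_1,\dots,e_{N-7}$ and, the remaining one–sided cases being symmetric, that $\partial u/\partial x_j\le C_j$. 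Then $\langle\nu,\,e_j+C_je_{N+1}\rangle=(C_j-\partial u/\partial x_j)/W\ge0$, so each controlled direction produces a \emph{nonnegative} Jacobi field on $M$.

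Next I would take the blow–down: the rescalings $R^{-1}M$ converge subsequentially to an area–minimizing cone which, by the classical structure theory of entire minimal graphs, is a vertical cylinder $\Sigma\times\R e_{N+1}$ with $\Sigma\subset\R^N$ an $(N-1)$–dimensional area–minimizing cone, and $u$ is affine precisely when $\Sigma$ is a hyperplane. On the regular part the normals converge to $(\nu_\Sigma,0)$, so $\langle\nu,e_{N+1}\rangle=1/W\to0$ and hence $W\to\infty$ at regular points. Since $\partial u/\partial x_j\le C_j$ forces $\langle\nu,e_j\rangle=-(\partial u/\partial x_j)/W\ge -C_j/W\to0$ along the blow–down, I obtain $\langle\nu_\Sigma,e_j\rangle\ge0$ on $\mathrm{reg}\,\Sigma$ for $j=1,\dots,N-7$. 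Here lies the essential difference with the two–sided case: a two–sided bound would give $\langle\nu_\Sigma,e_j\rangle=0$ outright and thus immediate invariance of $\Sigma$ in the directions $e_j$.

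The heart of the matter, which I expect to be the main obstacle, is to upgrade the one–sided information $\langle\nu_\Sigma,e_j\rangle\ge0$ into a splitting. Each $\langle\nu_\Sigma,e_j\rangle$ is a nonnegative Jacobi field, hence superharmonic ($\Delta_\Sigma w=-|A_\Sigma|^2 w\le0$) on the connected regular part, so by the strong maximum principle it is either identically $0$ or everywhere positive. In the first case $e_j$ is tangent to $\Sigma$ and $\Sigma$ splits off a factor $\R e_j$. The delicate case is strict positivity, in which $e_j$ is nowhere tangent and $\mathrm{reg}\,\Sigma$ is a dilation–invariant minimal graph in the direction $e_j$; I would isolate the needed statement as a rigidity lemma, namely that an area–minimizing hypercone $\Sigma$ with $\langle\nu_\Sigma,e\rangle\ge0$ is either a cylinder over $e$ or a hyperplane, and making this rigorous across the singular set (e.g. establishing entireness and the gradient control required to invoke Moser's theorem) is the technical crux. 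Granting it, I iterate over $j=1,\dots,N-7$: each direction either splits off or already forces flatness, so after peeling off the split directions $\Sigma=\Sigma'\times\R^{N-7}$ with $\Sigma'\subset\R^{7}$ an area–minimizing cone of dimension $6$. Since singular area–minimizing hypercones first occur at the dimension of the Simons cone, every $6$–dimensional minimizing cone in $\R^{7}$ is a hyperplane; thus $\Sigma'$, hence $\Sigma$, is flat and $u$ is affine. The count $N-7$ is sharp exactly because this reduction lands on the borderline dimension $\R^{8}$, where the Simons cone blocks any further improvement.
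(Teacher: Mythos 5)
Your overall architecture coincides with the paper's: blow down the graph, show the limit is a vertical cylinder $\Sigma\times\R$, use the $N-7$ one-sided bounds to force each controlled horizontal direction either to split off or to flatten the cone, and land on a minimizing cone in $\R^{7}$, which Simons' theorem forces to be a hyperplane. The differences lie in the two steps you treat as black boxes, and both are precisely the content the paper supplies. First, the ``classical structure theory'' giving the vertical cylinder is not an off-the-shelf citation: it is the first half of the paper's Theorem \ref{Splitting}, proved by realizing the blow-down as the subgraph of a quasi-solution $v$ (Miranda/Giusti), showing $P=\{v=+\infty\}$ is non-empty --- this is exactly where non-affineness enters, via the Bombieri--De Giorgi--Miranda gradient estimate and Moser's theorem --- and then identifying $C$ with $P\times\R$ by the strong maximum principle for minimal boundaries (Moschen, Gonzalez--Massari--Miranda). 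Relatedly, your phrase ``$u$ is affine precisely when $\Sigma$ is a hyperplane'' glosses over a real step: if $u$ is affine the blow-down is a non-vertical hyperplane, not a cylinder $\Sigma\times\R$, and the converse implication ``flat blow-down $\Rightarrow$ $u$ affine'' requires the monotonicity/density argument the paper runs (constancy $\omega_N=R^{-N}\int_{B(0,R)}|D{\bf 1}_U|$ for all $R$ forces $U$ to be a half-space).

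Second, and more importantly, the rigidity statement you isolate as the ``technical crux'' --- a minimizing hypercone with $\langle\nu_\Sigma,e\rangle\ge0$ is either a cylinder in the direction $e$ or a hyperplane --- is left unproven in your attempt (``Granting it, I iterate''), so as written there is a genuine gap; but the statement is true and known: it is exactly the paper's Proposition \ref{LemmaGMM}, i.e.\ Lemma 2.3 of Gonzalez--Massari--Miranda, once the normal-sign condition is translated into the set inclusion $\mathcal{C}+e\subseteq\mathcal{C}$ (for the set $E$ bounded by $\Sigma$, the sign of $\langle D{\bf 1}_E,e\rangle$ as a measure gives monotonicity of ${\bf 1}_E$ under translation, which sidesteps your worry about arguing across the singular set; your sketched alternative via ``dilation-invariant minimal graph plus Moser'' is the hard way and is not carried out). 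Note also that the paper transfers the one-sided bounds at the level of functions rather than normals: $u_j(x+e_\alpha)-u_j(x)=\frac{1}{j}\int_0^j\partial_\alpha u\,dt\ge -K$, together with a.e.\ convergence of $u_j$ to the quasi-solution, gives $P+e_\alpha\subseteq P$ directly, avoiding the Allard-type smooth convergence of normals on the regular part that your route needs. So the architecture and the identification of the crux are right; to complete the proof you must either cite the Gonzalez--Massari--Miranda lemma or reproduce its maximum-principle proof, and fill in the structure theorem for blow-downs rather than assert it.
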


Theorem \ref{Th.N-7} is sharp. Indeed, it cannot hold true if one assumes that only $N-8$ partial derivatives are bounded on one side. To see this, let us denote by $U=U(x_1,...,x_8)$ the non-affine solution of \eqref{EqSupMin2} in $\R^8$ constructed in \cite{BDeGG} and, for every $ N\ge 8$, set $u_N(x_1,...,x_N) := U(x_1,...,x_8)$. Clearly, $u_N$ solves \eqref{EqSupMin2}, it has $N-8$ partial derivatives that are identically zero, but $u_N$ is not an affine function. 

\medskip

The proof of the previous theorem is based on the following result, which is interesting in its own.

\medskip

\begin{theorem} \label{Splitting}  Let $u$ be a {\underline {non-affine}}  solution of the minimal surface equation 

\beq \label{EqSupMin3}
- {\rm div} \left( \frac{\nabla u}{\sqrt{1 + \vert \nabla u \vert^2 }} \right) = 0 \qquad {\rm in} \qquad \R^N, \qquad N\ge2
\eeq
such that \beq\label{zero-bordo}
u(0) =0.
\eeq
Suppose also that there exists an integer $k \in \{1,...,N\}$ such that, for every $ \alpha =1,...,k,$ the partial derivative $\frac{\partial u}{\partial x_{\alpha}}$ is bounded from below on $\R^N$.


\noindent Then, any blow-down $C \subset \R^{N+1}$ 
associated to the subgraph of $u$ is a minimal cylinder of the form 

\beq\label{conf-cilindro}
C = P \times \R = (\R^{k} \times P') \times \R, 
\eeq
where $P^{'} \subset \R^{N-k}$ is a minimal cone singular at the origin. 
\end{theorem}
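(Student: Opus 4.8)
The plan is to realise the blow-down $C$ as a minimizing cone carrying two translation-monotonicities inherited from $u$, and then to upgrade each \emph{one-sided} monotonicity into a genuine splitting by a maximum-principle argument, the non-affineness of $u$ being precisely what excludes the degenerate (flat) alternative. I would first record the two monotonicity properties of the subgraph $U=\{(x,t)\in\R^N\times\R : t<u(x)\}$. Being an epigraph-type set, $U$ is monotone in the downward vertical direction: $U+\sigma(-e_{N+1})\subseteq U$ for every $\sigma\ge0$. Next, if $\partial u/\partial x_\alpha\ge -M_\alpha$ on $\R^N$, then for $s\ge 0$ one has $u(x+se_\alpha)-u(x)=\int_0^s \frac{\partial u}{\partial x_\alpha}(x+\sigma e_\alpha)\,d\sigma\ge -M_\alpha s$, which translates into the containment $U+\sigma\, w_\alpha\subseteq U$ for all $\sigma\ge 0$, where $w_\alpha:=e_\alpha-M_\alpha e_{N+1}$. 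Both relations are scale invariant: applying the homothety $z\mapsto z/R$ shows that the rescaled subgraphs $U_R:=U/R$ satisfy the \emph{same} containments, with the same directions $-e_{N+1}$ and $w_\alpha$.

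I would then pass to the blow-down. The subgraph $U$ is area minimizing, hence so is each $U_R$; along the subsequence $R\to\infty$ defining $C$ one has $\chi_{U_R}\to\chi_C$ in $L^1_{\rm loc}$, and by \eqref{zero-bordo} together with the monotonicity formula $C$ is a minimizing cone with vertex $0\in\partial C$. Since set containments are preserved under $L^1_{\rm loc}$ limits, one obtains $C-\sigma e_{N+1}\subseteq C$ and $C+\sigma w_\alpha\subseteq C$ for all $\sigma\ge0$ and all $\alpha=1,\dots,k$. The heart of the proof is the following rigidity, applied to each such direction: \emph{if a minimizing cone $C\subset\R^{N+1}$ satisfies $C+\sigma e\subseteq C$ for all $\sigma\ge0$, then either $C$ is invariant under translations in the direction $e$, or $C$ is a half-space.} Indeed, on the smooth, connected part of $\partial C$ the function $\phi:=\nu_C\cdot e$, with $\nu_C$ the inner normal, is nonnegative and solves the Jacobi equation $\Delta_{\partial C}\phi+|A_{\partial C}|^2\phi=0$, because $e$ generates a one-parameter group of ambient translations; the strong maximum principle then forces, on each component, either $\phi\equiv 0$, so that $e$ is everywhere tangent and $\partial C$ splits off the line $\R e$, or $\phi>0$ everywhere, and the latter alternative forces $C$ to be a half-space.

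Applying this rigidity with $e=-e_{N+1}$ yields that $C$ is a vertical cylinder $C=\hat C\times\R$; the half-space alternative is discarded because a half-space blow-down forces $\nabla u$ to be bounded and hence, by Moser's theorem \cite{Mos}, $u$ to be affine, contrary to hypothesis. Using the resulting invariance of $C$ under $e_{N+1}$, the relation $C+\sigma w_\alpha\subseteq C$ simplifies to $C+\sigma e_\alpha\subseteq C$, and applying the rigidity once more in each direction $e_1,\dots,e_k$ produces invariance of $C$ under $e_1,\dots,e_k$ as well. Hence $C$ is invariant under $e_1,\dots,e_k,e_{N+1}$, which is exactly the asserted form $C=(\R^k\times P')\times\R$ with $P'\subset\R^{N-k}$ a minimizing cone; moreover $P'$ must be singular at the origin, since otherwise $C$ would be a hyperplane and $u$ affine.

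I expect the main obstacle to be the rigidity step on the \emph{singular} cone $C$: one must run the Jacobi-field/strong-maximum-principle argument across the singular set — whose codimension is at least $7$ — in order to upgrade the one-sided ordering $C+\sigma e\subseteq C$ into honest invariance, and one must cleanly justify that the surviving half-space alternative is incompatible with $u$ being non-affine. Handling the singular set rigorously, for instance via the strong maximum principle for minimizing boundaries or by a sliding argument that first produces an interior contact of the two ordered minimal boundaries in the regular part, is the delicate point; by contrast, the derivation of the two monotonicities and their passage to the blow-down are comparatively routine.
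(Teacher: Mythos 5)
Your overall architecture is genuinely different from the paper's, and its skeleton is viable. The paper never works with translation monotonicity of the blow-down $C$ inside $\R^{N+1}$: it realizes $C$ as the subgraph of a generalized (quasi-)solution $v$ (Lemma 16.3 and Proposition 16.5 of \cite{G}), sets $P=\{v=+\infty\}$, proves $P\neq\emptyset$ by combining Lemma 17.7 of \cite{G} with the gradient estimate of \cite{BDeGMir} and Moser's theorem \cite{Mos}, obtains the vertical splitting $C=P\times\R$ from the strong maximum principle for possibly singular minimal boundaries (\cite{Moschen}, Theorem 2.4 of \cite{GMM}) applied at the common boundary point $0$, and only then uses the horizontal monotonicity, at the level of the cone $P\subset\R^N$, through Lemma 2.3 of \cite{GMM}. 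You replace all of this by one dichotomy (``one-sided translation monotonicity of a minimizing cone implies invariance or half-space'') applied $k+1$ times to $C$ itself, with the tilted directions $w_\alpha=e_\alpha-M_\alpha e_{N+1}$; your derivation of the monotonicities, their scale invariance, and their persistence under $L^1_{loc}$ limits are all correct, and this part is an attractive simplification.

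The genuine gap is that your dichotomy \emph{is} Lemma 2.3 of \cite{GMM} (the paper's Proposition \ref{LemmaGMM}) transplanted to $\R^{N+1}$ --- it is the crux of the entire proof --- and your Jacobi-field sketch does not prove it. The decisive alternative, ``$\phi=\nu_C\cdot e>0$ on the regular part forces $C$ to be a half-space,'' is asserted without any argument; it is essentially a restatement of the lemma, not a step toward it. Note that positivity of \emph{some} Jacobi field is stability-type information and cannot force flatness in ambient dimension $\ge 8$ (the Simons cone is minimizing and carries positive Jacobi fields, e.g.\ from the Hardt--Simon foliation); what must be exploited is that $\phi$ is the normal component of a \emph{translation}, i.e.\ graphicality of $\partial C$ in the direction $e$, and upgrading local graphicality on the regular part to a global graph (injectivity of the projection, control at the singular set, possibly disconnected regular part) is exactly the hard content. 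The known proofs go through the strong maximum principle for singular minimal boundaries, the very tool the paper cites. A second, smaller flaw: your exclusion of the half-space alternative (``half-space blow-down $\Rightarrow$ $\nabla u$ bounded $\Rightarrow$ Moser'') is unjustified and, as stated, does not cover a vertical half-space; the correct mechanism, used in the paper, is monotonicity of the perimeter ratio, which shows that a half-space blow-down forces $U$ itself to be a half-space, contradicting that $\partial U$ is the graph of a non-affine function. By contrast, your $\phi\equiv 0$ branch is easy to repair: since the singular set is $\vert D{\bf 1}_C\vert$-null, $\phi\equiv 0$ on the regular part gives $D{\bf 1}_C\cdot e=0$ distributionally, hence honest invariance. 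If you replace the Jacobi-field sketch by a citation of Lemma 2.3 of \cite{GMM} applied in $\R^{N+1}$, and use the density argument for the half-space exclusion, your route does close, and arguably more directly than the paper's.
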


\section{Proofs}

Let us first consider Theorem 1.2. 

{\begin{proof} [Proof of Theorem \ref{Splitting}] 
Let $U$ be the subgraph of $u$ and $U_j$ the one of the functions $u_j (x) = \frac{u(jx)}{j}$, where $x \in \R^N$ and $ j \ge 1$. 

By assumption, $U$ is a non-trivial set of least perimeter in $\R^{N+1}$ with $0 \in \partial U$.
Therefore, a classical procedure (see e.g. Theorem 17.3 and Theorem 9.3 of \cite{G}) provides a minimal cone $C \subset \R^{N+1}$ (with vertex at the origin), as the limit of a {\it subsequence} of $U_j$ (still denoted by $U_j$) with respect to the $L^1_{loc}$ convergence. The minimal cone $C$ is usually called a blow-down of $U$ and we have that, for almost every $ R>0$,

\beq\label{stimaC}
\omega_N \le R^{1 -(N+1)} \int_{B(0,R)} \vert D {\bf 1}_{C} \vert.
\eeq
where $\omega_N$ denotes de volume of the unit ball of $ \R^N$. Indeed, by the minimality of the sets $U_j$, the monotonicity of the functions $ R \to R^{1 -(N+1)} \int_{B(0,R)} \vert D {\bf 1}_{U_j} \vert $ and the fact that $ 0 \in \partial U_j$, the following well-known density estimates hold (see for instance formula (5.16) on p. 72 of \cite{G} or formula (1.13) on p.2 of \cite{GMM})  

\beq\label{stima1}
\omega_N \le (Rj)^{1 -(N+1)} \int_{B(0,Rj)} \vert D {\bf 1}_U \vert =R^{1 -(N+1)} \int_{B(0,R)} \vert D {\bf 1}_{U_j} \vert. 
\eeq
Therefore 

\beq\label{stima2}
\omega_N \le R^{1 -(N+1)} \int_{B(0,R)} \vert D {\bf 1}_{U_j} \vert  \to  R^{1 -(N+1)} \int_{B(0,R)} \vert D {\bf 1}_{C} \vert,
\eeq
proving \eqref{stimaC}. From \eqref{stimaC} we get that $0 \in \partial C$ and, by Lemma 16.3 and Proposition 16.5  of \cite{G}, we also know that $C$ is itself a subgraph of a generalized solution to the minimal surface equation $v : \R^N \to [-\infty, +\infty]$ (also called quasi-solution to the minimal surface equation) (see 
\cite{Mir}, \cite{MasMir}, \cite{G}). 

Also the set
\beq\label{P}
P = \{ x \in \R^N\, : \, v(x) = + \infty \, \}
\eeq


\noindent must be non-empty since $u$ is non-affine. To see this we follow \cite{G}. If $P = \emptyset$ then, by Lemma 17.7 of \cite {G}, the family of functions $u_j$ is equibounded from above on compact sets of $ \R^N$. The latter and the definition of $u_j$ immediately provides that 
\beq\label{equi-u}
\sup_{B(0,j)} u \le {\cal K} j   
\eeq
for some constant ${\cal K}>0$. 
On the other hand, the celebrated gradient estimate of \cite{BDeGMir} tells us that 
\beq\label{stimaGrad}
\forall \, x \in \R^N, \,\, \forall R>0,  \qquad \vert \nabla u(x) \vert \le C_1 exp \Big [ C_2 \Big ( 
\frac{{\sup_{B(x,R)} u - u(x)}}{R} \Big )\Big ]   
\eeq
 where $C_1$ and $C_2$ are positive constant depending only on the dimension $N$.  
Now, combining \eqref{equi-u},\eqref{stimaGrad} and letting $j \to +\infty$ we obtain that $\vert \nabla u \vert \in L^{\infty}(\R^N)$.  Thus, we can apply the result of Moser \cite{Mos}, already recalled in the introduction, to get that $u$ is an affine function. The latter is impossible since we are supposing that $u$ is not affine. 

We also remark that 
\beq\label{PperRdentroC}
P \times \R \subset C
\eeq
by construction, and so $ P \not\equiv \R^N$, since $C \not\equiv \R^{N+1}$.

Also, $P$ is a minimal cone in $\R^N$,  with vertex at the origin (since $C$ is a minimal cone with vertex at the origin). Combining the two latter pieces of information we also get that the origin of $\R^N$ belongs to $ \partial P$ and so $0 \in \partial (P\times \R)$.

Next we observe that $P$ is singular at the origin of $\R^N$. Suppose not, then $P$ would be a half-space of $\R^N$ and thus the minimal cone $C$ would contain the half-space $P\times \R$ which, in turn, would give $C  \equiv P\times \R$ (see for instance Theorem 15.5 of \cite{G} or \cite{MirPMax}) and so 

\beq\label{C-semispazio}
\forall \, R >0 \qquad \omega_N = R^{1 -(N+1)} \int_{B(0,R)} \vert D {\bf 1}_{C} \vert
\eeq

Therefore, by \eqref {C-semispazio}, the monotonicity of $ R \to R^{1 -(N+1)} \int_{B(0,R)} \vert D {\bf 1}_U \vert $ and \eqref{stima1}-\eqref{stima2} we obtain

\beq\label{U-semispazio}
\forall \, R >0 \qquad \omega_N = R^{1 -(N+1)} \int_{B(0,R)} \vert D {\bf 1}_{U} \vert
\eeq
which proves that $U$ is a half-space and $ U = C$ (by the construction of $C$). 

But $ U = C  \equiv P\times \R$ implies that $ \partial U$ is a vertical hyperplane, contradicting  the fact $\partial U$ is the graph of the function $u$. 


Moreover, by the discussions above, we have that the minimal cone $ P \times \R$ satisfies $P \times \R \subset C$ and $0 \in \partial (P\times \R) \cap \partial C$. Therefore, $ P \times \R$ 
must coincide with the minimal cone $C$ (cf. \cite{Moschen} or Theorem 2.4 of \cite{GMM}).  This provides the first equality in \eqref{conf-cilindro} with $P$ a minimal cone singular at the origin.

\medskip

To conclude we observe that the assumption on the partial derivatives implies the existence of a constant $ K>0$ such that for every $j \ge 1$, every $x \in \R^N$ and every $\alpha = 1,...,k$
\beq\label{j-monot}
u_j(x + e_{\alpha}) - u_j(x) = \frac{1}{j} \int_0^j \frac{\partial u}{\partial x_{\alpha}} (jx_1,..., jx_{\alpha}+t,...,jx_N) dt  \ge -K, 
\eeq
where $e_{1},...,e_{k}$ denote the first $k$ vectors of the standard (or natural) basis for $ \R^N$.   
Now recall that, up to a subsequence, $(u_j)$ converges almost everywhere to $v$ on $\R^N$ (cf. for instance Proposition 16.5  of \cite{G}) so, by letting $j\to +\infty $ in \eqref{j-monot}, we get that for almost every $x \in P$ the point $x +e_{\alpha} $ also belongs to $P$, for every $ \alpha=1,...,k$. 

Therefore we have  
\beq\label{Harnack-Bombieri-Giusti}
\forall \, \alpha=1,...,k \qquad P + e_{\alpha}  \subseteq P.  
\eeq
By applying Proposition \ref{LemmaGMM} we deduce that $P$ is a cylinder in the directions $e_1,..., e_k$, for otherwise, $P$ would be a half-space of $\R^N$ contradicting the fact that $P$ is singular at the origin.


Since $P$ is a cylinder in the directions $e_1,..., e_k$ we must have $P =\R^{k} \times P',$ with $P^{'} \subset \R^{N-k}$ minimal (since $P$ is minimal in $\R^N$) and $P^{'}$ singular at the origin of $\R^{N-k}$ (since $P$ is singular at the origin of $\R^N$). This concludes the proof. 



\end{proof}

Now we can prove the main result.

{\begin{proof} [Proof of Theorem \ref{Th.N-7}] The function $v(x) = u(x) - u(0) $ is a solution of \eqref{EqSupMin3} with $v(0) =0$ and with $N-7$ partial derivatives bounded on one side (not necessarily the same).

Up to change the variable $ x_j $ in $- x_j$ (if necessary), we can suppose that $v$ has $N-7$ partial derivatives which are bounded from below.  Also, we can assume that those partial derivatives are taken with respect to the first $N-7$ variables.  If $v$ were not linear then, by applying Theorem \ref{Splitting} with $k=N-7$, we would get 

\beq\label{conf-cilindro-N-7}
C = P \times \R = (\R^{N-7} \times P^{'}) \times \R, 
\eeq

\bigskip

\noindent where $P^{'} \subset \R^7$ would be a minimal cone singular at the origin.  But this is is impossible since all the (non-trivial) minimal cones in dimension $ n \le 7$ are half-spaces. This gives that $v$ is a linear function and so $u$ must be affine, as desired. 

\end{proof}

\bigskip
Finally we state and proof Proposition \ref{LemmaGMM} used in the last part of the proof of Theorem \ref{Splitting}. This result is essentially contained in (the proof of) Lemma 2.3 of \cite{GMM}. 

\bigskip

\begin{proposition} \label{LemmaGMM} Assume $k \ge 2$ and let ${\cal C}$ be a non-trivial minimal cone in $\R^k$ such that $ {\cal C} + v \subseteq {\cal C},$ for some $ v \in \R^k 
\setminus \{ 0 \}$. Then, either $ {\cal C} + v \neq {\cal C}$ and ${\cal C} $ is a half-space or,
$ {\cal C} + v = {\cal C}$ and ${\cal C}$ is a cylinder in the direction $v$. 
\end{proposition}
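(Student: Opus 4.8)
The plan is to exploit the conical structure of ${\cal C}$ in order to upgrade the one-step hypothesis ${\cal C}+v\subseteq{\cal C}$ to a scale-invariant monotonicity, and then to split along the two alternatives of the dichotomy. First I would observe that, since ${\cal C}$ is a cone, $\mu{\cal C}={\cal C}$ for every $\mu>0$; multiplying the inclusion ${\cal C}+v\subseteq{\cal C}$ by $\mu$ therefore gives ${\cal C}+\mu v\subseteq{\cal C}$ for all $\mu>0$. The same scaling shows that equality cannot hold for a single parameter without holding for all of them: if ${\cal C}+\mu_0 v={\cal C}$ for some $\mu_0>0$, then multiplying by $1/\mu_0$ yields ${\cal C}+v={\cal C}$. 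Hence the two cases ``${\cal C}+v\neq{\cal C}$'' and ``${\cal C}+v={\cal C}$'' correspond exactly to the strict inclusion ${\cal C}+\mu v\subsetneq{\cal C}$ for every $\mu>0$, respectively to equality ${\cal C}+\mu v={\cal C}$ for every $\mu>0$.

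The equality case is then immediate: from ${\cal C}+\mu v={\cal C}$ for all $\mu>0$, together with the translate of this identity by $-v$ (which gives ${\cal C}-\mu v={\cal C}$), one gets ${\cal C}+tv={\cal C}$ for every $t\in\R$, that is, ${\cal C}$ is a cylinder in the direction $v$, as claimed.

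For the strict case the goal is to prove that ${\cal C}$ is a half-space, and here I would pass to a graphical description of $\partial{\cal C}$ over the hyperplane $\Pi=v^{\perp}$. Strict monotonicity gives $x\in{\cal C}\Rightarrow x+\mu v\in{\cal C}$ for all $\mu\ge0$, so for each $y\in\Pi$ the slice $\{t\in\R:\ y+tv\in{\cal C}\}$ is closed and upward-closed, hence a half-line $[\tau(y),+\infty)$ with a threshold $\tau(y)\in[-\infty,+\infty]$; the cone property forces $\tau$ to be positively $1$-homogeneous. Granting that $\tau$ is finite everywhere, $\partial{\cal C}$ is an entire graph of a weak, hence (by the regularity of minimal graphs, which have empty singular set) classical and smooth, solution of the minimal surface equation on $\Pi\cong\R^{k-1}$. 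Since a function that is $1$-homogeneous and differentiable at the origin is necessarily linear, this gives $\tau(y)=\langle a,y\rangle$, so $\partial{\cal C}$ is a hyperplane through $0$ and ${\cal C}$ is a half-space.

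The main obstacle is exactly the finiteness of $\tau$, that is, excluding lines parallel to $v$ that lie entirely inside ${\cal C}$ (where $\tau=-\infty$) or entirely outside it (where $\tau=+\infty$). I expect this to be where minimality is genuinely used: the presence of a full line in ${\cal C}$ should force, by a splitting and strong-maximum-principle argument, local invariance of ${\cal C}$ in the direction $v$ (the translated minimal boundary $\partial{\cal C}+\mu v$ would touch $\partial{\cal C}$ from one side and therefore coincide with it), contradicting the strict inclusion ${\cal C}+\mu v\subsetneq{\cal C}$. Controlling this contact, together with the analogous exclusion of empty slices and the measure-theoretic regularity needed to treat $\partial{\cal C}$ as a genuine graph, is the crux; this is precisely the content of Lemma 2.3 of \cite{GMM}, on which I would rely.
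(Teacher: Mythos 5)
Your proposal is correct and takes essentially the same route as the paper, whose entire proof of this proposition is the single line ``Same proof of Lemma 2.3 of \cite{GMM}''; your sketch (upgrading the inclusion to ${\cal C}+\mu v\subseteq{\cal C}$ for all $\mu>0$ by scaling, the cylinder case, and the graph/$1$-homogeneity/regularity argument for the half-space case) is in substance a reconstruction of that lemma's proof. Since you defer the one genuinely hard step --- finiteness of $\tau$, which in \cite{GMM} comes from the strong maximum principle for minimal boundaries (the touching/disjoint dichotomy for $\partial{\cal C}$ and $\partial{\cal C}+v$) --- to exactly the reference the paper itself invokes, your argument is, if anything, more detailed than the paper's own.
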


\begin{proof}
Same proof of Lemma 2.3 of \cite{GMM}. 
\end{proof}





\vspace{0.5cm}

\noindent \textbf{Acknowledgements: } The author thanks E. Valdinoci and L. Mazet for a careful reading of a first version of this article. The author is partially supported by the ERC grant EPSILON ({\it Elliptic Pde's and Symmetry of Interfaces and Layers for Odd Nonlinearities}) and by the ERC grant COMPAT ({\it Complex Patterns for Strongly Interacting Dynamical Systems}).

\vspace{0.5cm}

\end{document}